\numberwithin{equation}{section}
\newtheorem{theorem}{Theorem}[section]
\newtheorem{lemma}[theorem]{Lemma}
\newtheorem{cor}[theorem]{Corollary}
\theoremstyle{definition}
\newtheorem{definition}[theorem]{Definition}
\newtheorem{example}[theorem]{Example}
\newtheorem{remark}[theorem]{Remark}
\def\<{{\langle}}
\def\>{{\rangle}}
\def\G{{\Gamma}}
\def\a{{\alpha}}
\def\b{{\beta}}
\def\Z{\mathbb Z}
\def\S{{\mathbb S}}
\def\a{\alpha}
\def\b{\beta}
\def\t{\tau}
\def\G{\Gamma}
\def\e{\epsilon}
\def\Si{{\Sigma}}
\def\ni{\noindent} 
\begin{document}

\title{Goeritz and Seifert Matrices from Dehn Presentations} 

\author{Daniel S. Silver
\and
Lorenzo Traldi
\and Susan G. Williams \thanks {The first and third authors are partially supported by the Simons Foundation. 
} }

\maketitle 


\begin{abstract} The Goeritz matrix of a link is obtained from the Jacobian matrix of a modified Dehn presentation associated to a diagram using Fox's free differential calculus. When the diagram is special the Seifert matrix can also be determined from the presentation. \end{abstract}

\begin{center} Keywords: \textit {link, Dehn presentation, Goeritz matrix, Seifert matrix};   AMS Classification: MSC: 57M25 \end{center}

\section{Introduction} \label{Intro} Lebrecht Goeritz introduced an integral matrix associated to knot diagrams in \cite{Go33}. The Goeritz matrix represents a class of quadratic forms of a link, a class that is invariant under link isotopy. Goeritz showed that the $p$th Minkowski units of his matrix, for $p$ an odd prime, are invariant. So too is the absolute value of the determinant of the Goeritz matrix.

Herbert Seifert soon recognized the topological importance of the Goeritz matrix \cite{Se36}. It is a relation matrix for the first homology group of the 2-fold cover of the 3-sphere $\S^3$ branched over each component of the link, and it determines a linking form on homology classes. 

During the succeeding half century, the definition of the Goeritz matrix was extended and modified (see \cite{GL78, Ky54, Mu65, Tr85}). However, unlike the Alexander matrix (see below), which can be derived from a presentation of the link group, the Goeritz matrix has been defined by combinatorial and topological means. Our purpose is to show how the Goeritz arises directly from a presentation of the link group that is closely related to the well-known Dehn presentation, using the machinery of Fox's free differential calculus. 

The presentation that we give (Theorem \ref{main}) is obtained from a link diagram. When 
the diagram is special and not split, the presentation yields a Seifert matrix for the link (Theorem \ref{seifert}). Consequently,  link invariants that are obtainable from a Seifert matrix can also be found from such a group presentation. For example, it is well known that the Blanchfield pairing of a knot is such an invariant. (See \cite{FP17} for a new proof of this fact and some of its history.) 

The present paper was motivated by \cite{SW18}, in which the first and third authors showed that a Seifert  matrix of an oriented link arises as the Laplacian matrix of a directed checkerboard graph with $\pm 1$-edge weights of a special diagram of the link. 

\section{The Goeritz matrix}  
\label{goeritz}

Consider a link $L \subset \S^3$ with diagram $D$ in the plane. It is well known that the complementary regions of $D$ can be shaded in checkerboard fashion so that each arc of the diagram separates a shaded region from an unshaded one. We adopt the common convention that the unbounded region is unshaded.



The \textit{reduced Goeritz matrix} $G=G(D)$ associated with the checkerboard diagram $D$ is an $n \times n$ integral matrix, where $n$ is the number of bounded unshaded regions. Let $U_0$ be the unbounded region, let $U_1, \ldots, U_n$ be the bounded unshaded regions, and, for $i \ne j \in \{0,\ldots,n\}$, let $C_{i,j}$ be the set of crossings of $D$ at which both $U_i$ and $U_j$ are incident. Let $\eta(c) = \pm 1$ denote the \textit{Goeritz index} of $c$, as in Figure \ref{index}. Then for $1 \leq i,j \leq n$ the $i,j$th entry $G_{i,j}$ is given by \begin{equation} G_{i,j} = \left\{
	\begin{array}{ll}

		-\sum\limits_{c \in C_{i,j}} \eta(c)  & \mbox{if } i \ne j \\
		\sum\limits_{\substack{k=0 \\ k\neq i}}^{n} \;\; \sum\limits_{c \in C_{i,k}} \eta(c) & \mbox{if } i =j. 
	\end{array} \right.  \end{equation}
	
\begin{figure}
\begin{center}
\includegraphics[height=1.2 in]{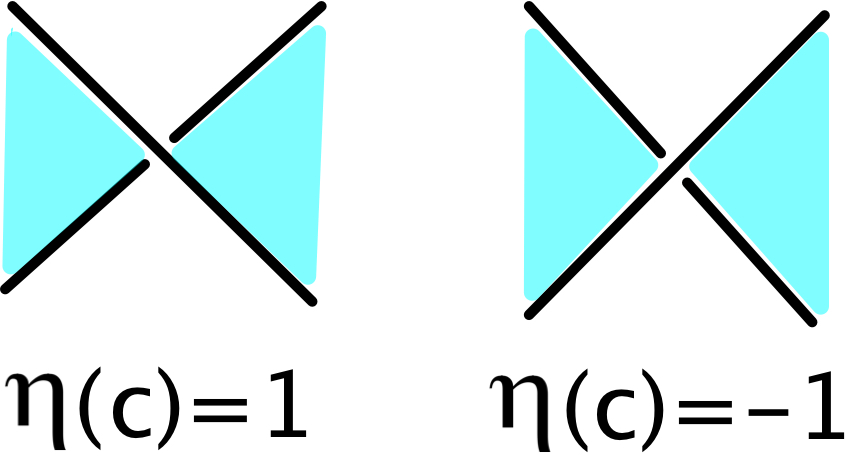}
\caption{Goeritz index of crossing $c$}
\label{index}
\end{center}
\end{figure}

We will make frequent use of the \textit{checkerboard graph} $\G=\G(D)$ with vertices corresponding to the shaded regions of $D$. Two vertices are joined by an edge whenever the corresponding regions meet at a crossing. (If only one shaded region appears at a crossing, the corresponding edge is a loop.) For notational convenience, we use the symbols of the shaded regions of $D$ also for the vertices of $\G$. We label each edge of $\G$ with \textit{weight} $+1$ or $-1$ according to the Goeritz index of the corresponding crossing  (Figure \ref{index}). The number of connected components of $\G$ will be denoted by $\b=\b(D)$.

The reduced Goeritz matrix $G$ is a reduced version of the much-studied Laplacian matrix associated with a checkerboard graph defined in the same way, but with a vertex for each unshaded region of $D$ instead. 

\section{Fox's differential calculus} \label{Fox} 

The free differential calculus \cite{Fo53, Fo54, Fo62, CF63} is a standard tool in both knot theory and combinatorial group theory. We briefly review the fundamental ideas. 

\begin{definition} \label{fdc} Let $w=w_1^{\e_1}\cdots w_n^{\e_n}$ be a word in symbols $w_1, \ldots, w_n$ with 
exponents $\e_i \in \{1, -1\}$. The symbols $w_1, \ldots, w_n$ need not be distinct. For each $i$, let $W_i$ denote the initial subword $w_1^{\e_1}\cdots w_{i-1}^{\e_{i-1}}$. For $w \in \{w_1, \ldots, w_n\}$, the \textit{partial derivative} $\partial W/\partial w$ is the element of the integral group ring of the free group on $w_1, \ldots, w_n$:
\begin{equation} \frac{\partial W}{\partial w} = \sum_{w_i = w} \left\{
	\begin{array}{ll}

		W_i  & \mbox{if } \e_i=1\\
		-W_iw^{-1} & \mbox{if } \e_i=-1. 
	\end{array} \right.  \end{equation}
	(The empty subword $W_1$ is indentified with the identity element $1$ of the group ring.)
	\end{definition}

\begin{definition} Let $\<x_1, \ldots, x_m \mid r_1, \ldots, r_n\>$ be a presentation of a group $\pi$. 
The \textit{Jacobian matrix} of the presentation is the $n \times m$ matrix $J$ with $i, j$ entry equal to $\partial r_i/\partial x_j$. 
\end{definition}

\begin{remark} \label{FDC} (1) When $\<x_1, \ldots, x_m \mid r_1, \ldots, r_n\>$ is a presentation of the group $\pi=\pi_L$ of a link $L$, the following specializations of the Jacobian matrix $J$ will be used.

The abelianization $\pi/\pi'$ is a free abelian group of finite rank $\mu$ equal to the number of components of the link. An abelianization homomorphism $\a: \pi \to (t_1) \times \cdots \times (t_\mu)$ can be defined sending an oriented meridian of the $i$th component of the link to $t_i$, and we use $\a$ to identify the integral group ring $\Z[\pi/\pi']$ with the Laurent polynomial ring $\Z[t_1^{\pm1},\ldots,t_{\mu}^{\pm1}]$.
(The homomorphism $\a$ depends only on the order and orientation of link components. We follow Fox's convention \cite[p. 122]{Fo62} that the meridian $t_i$ represents a loop whose linking number with the $i$th component is $-1$.) By extension we have 
a group ring homomorphism $\a: \Z[F(x_1, \ldots, x_m)] \to \Z[\pi/\pi']$, where $F(x_1, \ldots, x_m)$ is the free group on $x_1, \ldots, x_m$. The specialization $J^\a$ is defined by replacing each entry of $J$ with its image under $\a$. 

For any group presentation of $\pi$, the matrix $J^{\alpha}$ represents a homomorphism of free $\Z[\pi/\pi']$-modules with cokernel isomorphic to the \emph{Alexander module} of $L$; that is, the first homology of the universal abelian covering space of $\S^3\setminus L$ modulo the preimage of a point. The matrix $J^\a$ is also called an \textit{Alexander matrix} of $L$. 

Let $\t$ be the composition of $\a$ with the homomorphism $\Z[\pi/\pi'] \to \Z[t^{\pm 1}]$ sending each $t_i$ to $t$. The specialization $J^\t$ is defined by replacing each entry of $J$ with its image under $\t$.


Finally, let $\nu$ be the composition of $\t$ with the homomorphism $\Z[t^{\pm 1}] \to \Z[C]$, where $C$ is the 2-element group $\{1, -1\}$, and $t$ is mapped to $-1$. We refer to the specialization as \textit{$2$-reduction}. Define $J^\nu$ by replacing each entry of $J$ with its image under $\nu$.


\medskip

(2) It is useful to define the \textit{derivative} of $r_i$ as the group ring element $\sum_j (\partial r_i/\partial x_j) x_j$. (Compare with formula (2.2) of \cite{Fo53}.) 	
\end{remark}


\begin{theorem} \label{main} Assume that $D$ is a checkerboard shaded diagram of a link $L$. Let $n$ be the number of bounded unshaded regions, and $\b= \b(D)$ the number of connected components of the checkerboard graph $\G=\G(D)$. Then $\pi_L = \pi_1(\S^3 \setminus L)$ has a presentation of the form
\begin{equation} \< x_1, \ldots, x_{n+\b} \mid r_1, \ldots, r_n \> \end{equation}
such that \begin{itemize}
\item generators $x_1, \ldots, x_n$ correspond to the bounded unshaded regions of $D$;
\item generators $x_{n+1}, \ldots, x_{n+\b}$ correspond to certain shaded regions of $D$, one for each component of $\G(D)$, with $x_{n+\b}$ corresponding to a region adjacent to the unbounded region of $D$; 
\item each relator corresponds to a distinct unshaded bounded region of $D$; and
\item the 2-reduction $J^\nu$ of $J$ is equal to $(G\  {\bf 0})$, where $G$ is the reduced Goeritz matrix and {\bf 0} is the $n \times \b$ zero matrix.
\end{itemize}
\end{theorem}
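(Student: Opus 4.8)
The plan is to start from the classical Dehn presentation of $\pi_L$, delete the unbounded-region generator, use a spanning forest of $\G$ to eliminate most of the shaded-region generators by Tietze moves, replace the surviving relators by the boundary relations of the bounded unshaded regions, and then read $J^\nu$ off from Definition~\ref{fdc}. Recall that the Dehn presentation has a generator $g_R$ for each region $R$ of $D$ (of either colour), the relator $g_{U_0}$, and for each crossing $c$ a relator $\rho_c$ obtained by reading the four region generators met in cyclic order around $c$ in a standard form determined by the over/under data at $c$. Using the relator $g_{U_0}$, delete $g_{U_0}$. Since $g_{U_0}\mapsto1$ and crossing a single strand switches the checkerboard colour and multiplies the image under $\t$, hence under $\nu$, by $t\mapsto-1$, every unshaded generator has $\nu$-image $1$ and every shaded generator has $\nu$-image $-1$; these are exactly the values that will make the shaded columns of $J^\nu$ vanish.

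To obtain the stated shape, let $s$ be the number of shaded regions and choose a spanning forest $T$ of $\G$, one tree per component, with roots $x_{n+1},\dots,x_{n+\b}$ and with $x_{n+\b}$ chosen adjacent to $U_0$. Orienting each tree away from its root, every edge of $T$ is a crossing at which a child shaded region $S'$ meets its parent $S$; solving the crossing relator there for $g_{S'}$ writes $g_{S'}$ as a word of $\nu$-image $-1$ in $g_S$ and the two unshaded generators at that crossing, and I use it to eliminate $g_{S'}$. Carried out from the roots outward, these substitutions remove every shaded generator except the $\b$ roots and consume $s-\b$ of the crossing relators; since an Euler-characteristic count for $\G\subset\S^2$ ($s$ vertices, $n+1$ complementary faces, so the number of crossings is $s+n-\b$) shows that exactly $n$ crossing relators remain. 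Finally I replace these $n$ surviving relators by the boundary relations $r_1,\dots,r_n$ of the bounded unshaded regions $U_1,\dots,U_n$ — each $r_i$ being the product, with the needed conjugations, of the Dehn relators $\rho_c$ over the crossings $c$ on $\partial U_i$, with the non-root shaded generators substituted out as above; this is legitimate because on $\S^2$ any one boundary relation is a product of conjugates of the others, so the $r_i$ and the surviving relators normally generate the same subgroup. The result is a presentation of $\pi_L$ of exactly the form in the statement, with $r_i$ attached to $U_i$. Since each $r_i$ is built from crossings lying in one connected component of $D$ (equivalently of $\G$), it involves at most one of the root generators $x_{n+1},\dots,x_{n+\b}$.

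For the computation of $J^\nu$, first apply Definition~\ref{fdc} to a single $\rho_c$: after $2$-reduction, $\partial\rho_c/\partial g$ vanishes unless $g$ is one of the region generators at $c$, and with the standard form of $\rho_c$ chosen appropriately the two unshaded generators at $c$ contribute with opposite signs $\eta(c)$ and $-\eta(c)$, these two contributions cancelling when the two unshaded corners of $c$ coincide (consistently with such a crossing contributing nothing to $G$). Since $r_i$ is a product of conjugates of the $\rho_c$ over the crossings $c$ on $\partial U_i$, each counted once, with the shaded generators substituted, and since the product rule and Fox's chain rule are both compatible with $\nu$ (every conjugating factor and every substituted word has $\nu$-image $\pm1$), these per-crossing contributions add up with no extra signs: for $1\le j\le n$ one gets $(\partial r_i/\partial x_j)^\nu=-\sum_{c\in C_{i,j}}\eta(c)$ if $i\ne j$ and $(\partial r_i/\partial x_j)^\nu=\sum_{k\ne i}\sum_{c\in C_{i,k}}\eta(c)$ if $i=j$, which is precisely $G_{i,j}$. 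For $n<j\le n+\b$, if $x_j$ is not the single root generator occurring in $r_i$ then $\partial r_i/\partial x_j=0$ identically; otherwise, applying $\nu$ to the fundamental identity $r_i-1=\sum_\ell(\partial r_i/\partial x_\ell)(x_\ell-1)$ — under which every unshaded $x_\ell\mapsto1$ contributes $0$ and $x_j\mapsto-1$ — gives $-2\,(\partial r_i/\partial x_j)^\nu=(r_i)^\nu-1=0$, so $(\partial r_i/\partial x_j)^\nu=0$ since $\Z[C]$ is torsion free. Hence $J^\nu=(G\ \mathbf 0)$.

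The step I expect to be the main obstacle is the sign bookkeeping just invoked: pinning down the standard form of the Dehn relators so that the $\nu$-images of $\partial\rho_c/\partial g$ at the two unshaded corners of a crossing $c$ are exactly $\eta(c)$ and $-\eta(c)$ in the sense prescribed by Figure~\ref{index} (a short case analysis over the two shading types of a crossing), and then checking that when the $\rho_c$ are composed around $\partial U_i$ and the non-root shaded generators are eliminated, every conjugating factor and every substituted word has $\nu$-image $\pm1$ with the right sign, so that the contributions accumulate to the Goeritz entries with no residual signs. The remaining ingredients — that the Tietze moves and the replacement of relators preserve the group, and the Euler and face counts — are routine.
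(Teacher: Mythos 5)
Your overall architecture for non-split diagrams is the same as the paper's (Dehn presentation, spanning tree/forest elimination of shaded generators, boundary relators of the bounded unshaded regions, $\nu(\mbox{shaded})=-1$, $\nu(\mbox{unshaded})=+1$), and your use of the Fox fundamental identity to kill the root column is a nice alternative to the paper's cancellation argument. However, the heart of the fourth bullet --- that the per-crossing contributions assemble into exactly the Goeritz entries --- is asserted rather than proved, and the principle you invoke for it is not valid as stated. If you write $r_i$ as a product of conjugates $u_c\,\rho_c^{\pm1}\,\overline{u}_c$, then after $2$-reduction the contribution of $\rho_c$ is multiplied by $\nu(u_c)$; knowing only that $\nu(u_c)=\pm1$ does not give ``no extra signs,'' since a value $-1$ flips the sign, and such factors genuinely occur: any prefix of $r_i$ containing the single occurrence of $S_0^{\pm1}$ has $\nu$-image $-1$. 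Indeed, in the paper's normal forms $AU_iBS_0CU_jD\overline{S}_0$ and $A\overline{U}_jB\overline{S}_0C\overline{U}_iD\overline{S}_0$ the two occurrences at one crossing sit on opposite sides of $S_0$, and it is precisely this positional fact that produces the pattern $(+\eta(c),-\eta(c))$; the paper establishes it by writing the return values explicitly via the zig-zag formulas (\ref{oddrelator}), (\ref{evenrelator}) and doing the short case analysis you defer as ``the main obstacle.'' Without that analysis (or an equivalent one) the identification $J^\nu=(G\ {\bf 0})$ is not proved. Relatedly, your justification for trading the $n$ surviving crossing relators for the face relators (``any one boundary relation is a product of conjugates of the others'') is not the fact you need; what is needed, and what the paper uses, is that the based boundary loops of the bounded faces generate $\pi_1(\G,S_0)$, so that the cycle relator of each non-tree edge is a consequence of $r_1,\ldots,r_n$.

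The second genuine gap is the split case, which the theorem covers but your argument does not. When $D$ is split a bounded unshaded region can fail to be simply connected, and then its boundary meets several components of $\G$. If its relator is to see all crossings incident to the region --- as it must for the corresponding row of $J^\nu$ to equal the Goeritz row --- it necessarily involves several root generators, so your claim that each $r_i$ contains at most one root (on which your column-vanishing argument rests) is false in exactly the problematic case; the fundamental identity then only shows that the root-column entries of that row sum to zero, not that each vanishes. One must also say what ``the boundary relation of $U_i$'' means for such a region and re-verify that the modified relator set still presents $\pi_L$: the paper does this by appending to the relator of the enclosing region the clockwise outer-boundary relators of the enclosed surface components, arguing from innermost components outward that each appended word is a consequence of relators already present, and then using the fact that each root occurs exactly twice with cancelling Fox-derivative contributions to restore the zero columns. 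As written, your proposal establishes the theorem only for non-split diagrams, and even there modulo the sign bookkeeping above.
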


A link diagram is \textit{split} if some embedded circle separates it into two nonempty parts. Otherwise the diagram is \textit{non-split}. Since any region of $D$ can be regarded as the unbounded region via stereographic projection, the following is an immediate consequence of Theorem \ref{main}. 

\begin{cor} \label{cormain} Assume that $D$ is a non-split diagram of a link $L$. The group $\pi_L$ is generated by $m$ elements, where $m$ is the minimum of the numbers of shaded and unshaded regions of $D$.\end{cor}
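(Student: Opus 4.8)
The plan is to deduce the corollary from Theorem \ref{main} by exploiting the freedom to choose which region of $D$ lies at infinity. First I would record two preliminary facts. (a)~A non-split diagram $D$ has connected underlying projection: if the projection were disconnected, one of its connected pieces would lie either beside the others or inside a bounded complementary region (a disk) of another piece, and in either case an embedded circle would separate $D$ into two nonempty parts, contradicting non-splitness. (b)~A connected projection has connected checkerboard graphs, so $\b(D)=1$ when $D$ is non-split. Granting these, Theorem \ref{main} provides a presentation of $\pi_L$ with $n+\b(D)=n+1$ generators, where $n$ is the number of bounded unshaded regions of $D$; since the unbounded region is unshaded, $n+1$ is exactly the total number of unshaded regions of $D$, so $\pi_L$ is generated by that many elements.

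Next I would re-draw $D$ by stereographic projection so that a chosen shaded region of $D$ becomes the unbounded region, interchanging the two shadings so that the convention ``unbounded region unshaded'' is restored. The resulting diagram $D'$ represents the same link $L$, is again non-split (whether a diagram is split is insensitive to the choice of region at infinity), and its unshaded regions are precisely the shaded regions of $D$. Applying the first paragraph to $D'$ shows that $\pi_L$ is generated by as many elements as $D$ has shaded regions. Taking the smaller of the two bounds gives the asserted generating number $m$.

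The content here is not computational; the care goes into the two preliminary facts, namely that for diagrams ``non-split'' is equivalent to ``connected projection'' and is unchanged by moving a region to infinity, and that a connected projection yields a connected checkerboard graph so that $\b(D)=1$ and Theorem \ref{main} applies with exactly $n+1$ generators. With those settled, the corollary follows by applying Theorem \ref{main} first to $D$ and then to $D'$ and comparing the two counts.
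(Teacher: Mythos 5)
Your proof is correct and follows essentially the same route as the paper, which deduces the corollary from Theorem \ref{main} by noting that any region can be placed at infinity via stereographic projection; your write-up simply makes explicit the details the paper leaves implicit (that non-split diagrams have connected projections, hence $\b(D)=1$ and $n+\b$ equals the number of unshaded regions, and that non-splitness survives the change of unbounded region).
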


The number $m$ in Corollary \ref{cormain} is often much smaller than the number of generators required for a Wirtinger presentation. For instance, the Wirtinger presentation of the group of the pretzel link $K(p,q,r)$ has $p+q+r$ generators; but, for any values of $p, q$ and $r$, the presentation of Theorem \ref{main} requires only three generators.

\section{Proof of Theorem \ref{main}} \label{proof} We review the Dehn presentation of a link group $\pi_L$ (see \cite{LS77} for further details). Begin with a diagram $D$ of $L$, a generic projection of $L$ in the plane, using an artistic device to indicate how arcs pass over each other. Let $U_0$ denote the unbounded region. Choose two basepoints, one above $U_0$, the other below $U_0$ and directly under the first. Each complementary region $R$ of the diagram has an associated element of $\pi_L$, denoted also by $R$, and represented by a loop described as follows. Begin at the upper basepoint, and follow a horizontal path to a point over the interior of $R$; then descend along a vertical path through $R$ until reaching the depth of the lower basepoint; travel along a horizontal path to the lower basepoint; finally ascend through $U_0$ to the upper basepoint. Notice that the element of $\pi_L$ corresponding to $U_0$ is the identity.


Defining relators are of the form $R_1\overline R_2 R_3 \overline R_4$ (see Figure \ref{Dehnrelator}), one for each crossing of $D$. Here and throughout we denote the inverse of a group element $g$ by $\overline g$. By a \textit{Dehn relator} we will mean any such relator. With these relators, the generators $R$ corresponding to all regions generate the free product $\hat \pi \cong  \pi_L*\Z$, where the infinite cyclic factor is generated by $U_0$. The Dehn presentation of $\pi_L$ is obtained by including the relator $U_0$ along with the Dehn relators. 



\begin{figure}
\begin{center}
\includegraphics[height=1.2 in]{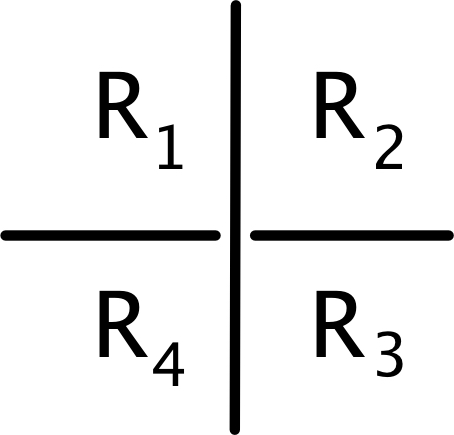}
\caption{Crossing with Dehn relator $R_1 \overline{R_2} R_3 \overline{R_4}$}
\label{Dehnrelator}
\end{center}
\end{figure}

\begin{remark}\label{Dehn} We offer a few general comments about Dehn presentations. \medskip

(1) Dehn generators have infinite order in $\hat \pi$, a fact that can be seen by mapping $\hat \pi$ to the infinite cyclic group $\Z$, sending each generator to $1$. \medskip

(2) Unlike the Wirtinger presentation, Dehn presentations do not require arcs of the link diagram to be oriented.  \medskip

(3) Re-indexing the regions $R_i$ produces equivalent presentations of $\hat \pi_L$.  (See \cite{LS77} for these and other facts about Dehn presentations.)  \medskip 

(4) 
If the overpassing arc in Figure \ref{Dehnrelator} belongs to the $j$th component of the link and is oriented upward, then $\a(R_1 \overline{R}_{2})=t_j=\a(R_4 \overline{R}_{3})$. Similarly, if the underpassing arc belongs to the $k$th component and is oriented from left to right then $\a(R_1 \overline{R}_{4})=t_k=\a(R_2 \overline{R}_{3})$.

\end{remark}


We prove Theorem \ref{main} first for any non-split link diagram $D$. For such diagrams every bounded region is homeomorphic to a disk. \bigskip

Select a shaded region adjacent to the unbounded region and label it $S_0$. It will correspond to the generator $x_{n+1}$ in the statement of the theorem. Recall that the unbounded region is labeled $U_0$.  Denote the remaining unshaded regions by $U_1, \ldots, U_n$; they will correspond to the generators $x_1, \ldots, x_n$.

Bounded faces of the plane graph $\G$ are identified with the regions $U_1, \ldots, U_n$. We orient the boundary $\partial U_i$ of each $U_i$ in the counterclockwise sense and join it to $S_0$ by a base path in $\G$. The homotopy classes of these based boundary loops 
freely generate the fundamental group $\pi_1(\G,S_0)$.

For each $U_i$ we define a relator $r_i$ in the generators $U_1, \ldots, U_n, S_0$, as follows.  Beginning at $S_0$, we follow the based loop associated to $U_i$ and use the Dehn relations corresponding to successive edges in order to rewrite shaded generators (vertices along the loop) in terms of $S_0$ and the unshaded generators of the regions that border the based loop (see Figure \ref{boundary}). Upon returning to $S_0$ we have the \textit{return value} of the based boundary loop, a word $W_i$ in $U_1, \ldots, U_n, S_0$. We define $r_i$ to be $W_i\overline{S}_0$, the \textit{boundary relator} of the region $U_i$.


\begin{figure} [bht]
\begin{center}
\includegraphics[height=1 in]{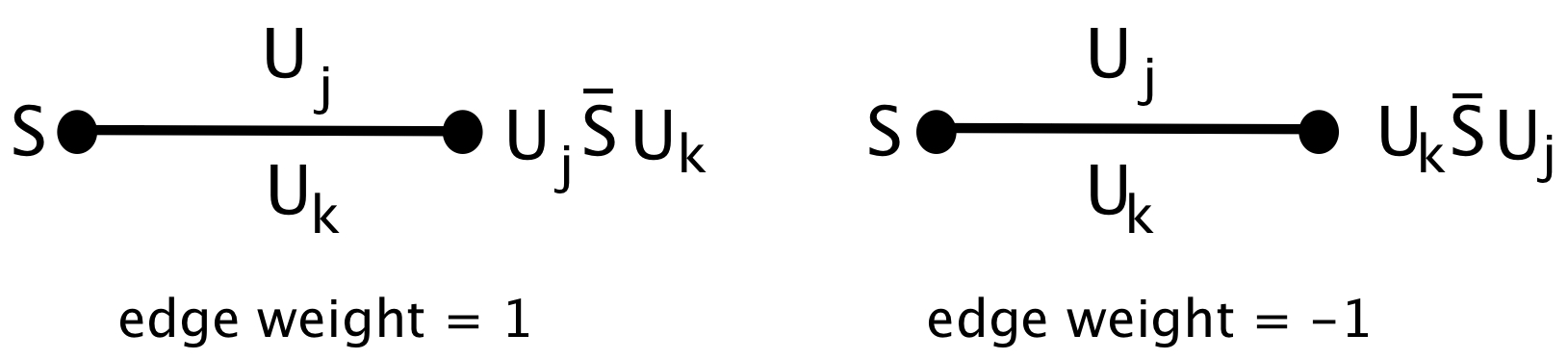}
\caption{Computing boundary along an edge of $\G$}
\label{boundary}
\end{center}
\end{figure}

By construction, each boundary relator $r_i$ is a consequence of the Dehn relators that correspond to the edges of the associated loop. It follows that adjoining $r_1, \ldots, r_n$ to the Dehn presentation does not change the fact that we have a presentation of $G$. 

Since the based boundaries of the regions $U_1, \ldots, U_n$ generate the fundamental group of $\G$, 
it follows that $W\overline{S}_0$ is in the normal closure of $r_1, \ldots, r_n$ for the boundary of {\sl any} based loop in $\G$. In particular, 
the loop that borders the unbounded region $U_0$ determines a relation $r^{\rm out}$ that is a consequence of $r_1, \ldots, r_n$. 


Computation of the return values $W$ is expedited by the following combinatorial process. 
Travel in the preferred direction along a based boundary loop of $\G$ that contains $\partial U_i$ and determines the boundary relator $r_i$. At each edge record a formal fraction $ \frac{a}{b}$, where $a, b$ are labels of the regions to either side of each edge; if the edge is weighted $+1$ (resp. $-1$), then $a$ is the label of the region to the left (resp. right) while $b$ is the label of the region to the right (resp. left). 

If the based boundary loop containing $\partial U_i$ has odd length $2l-1$, then we obtain a sequence:  

\begin{equation}\label{odd} \frac{a_1}{b_1},  \frac{a_2}{b_2},  \cdots,   \frac{a_{2l-2}}{b_{2l-2}}, \frac{a_{2l-1}}{b_{2l-1}} \end{equation}
The return value $W_i$ has the form $W'\overline S_0W''$, where 
$W'$ is the zig-zag alternating product of numerators and denominators in the sequence (\ref{odd}), working backward from the numerator $a_{2l-1}$ of the last term, and including the inverse of each denominator. $W''$ is a similar product, working forward from the denominator $b_1$ of the first term and including the inverse of each numerator. Explicitly,

\begin{equation}\label{oddrelator} W_i = a_{2l-1} \overline b_{2l-2} \cdots \overline b_2 a_1 \overline S_0
b_1 \overline a_2 \cdots \overline a_{2l-2} b_{2 l-1}. \end{equation}

If the based boundary loop containing $\partial U_i$ has even length $2l$, then we obtain a sequence:  

\begin{equation}\label{even} \frac{a_1}{b_1},  \frac{a_2}{b_2},  \cdots   \frac{a_{2l-1}}{b_{2l-1}}, \frac{a_{2l}}{b_{2l}} \end{equation}
The return value $W_i$ now has the form $W' S_0W''$, where 
$W'$ is the zig-zag alternating product of numerators and denominators in the sequence (\ref{even}), working backward from the numerator $a_{2l}$ of the last term, and including the inverse of each denominator. $W''$ is again a similar product, beginning with the inverse of the numerator $a_1$ of the first term, and including the inverse of each numerator. We have 

\begin{equation}\label{evenrelator} W_i = a_{2l} \overline b_{2l-1} \cdots a_2 \overline b_1  S_0 \overline
a_1  b_2 \cdots \overline a_{2l-1} b_{2 l}. \end{equation}

For any based boundary loop, we can replace the counterclockwise direction of $\partial U_i$ with the clockwise direction. The sequence of formal fractions that we obtain is a formal inverse: the order of terms is reversed while numerators and denominators are interchanged. Replacing $W_i$ by the new return value produces another word $r^*$ that we also call a boundary relator. It is not difficult to check that $r^*$ is a cyclic permutation of $r$ (resp. $r^{-1}$) if the loop has odd (resp. even) length.

Next we eliminate all shaded generators except $S_0$. For this it is convenient to use a spanning tree $T$ of $\G$. 
Following branches of $T$ from $S_0$, we rewrite each shaded generator (vertex) in terms of $U_1, \ldots, U_n, S_0$ (see Figure \ref{boundary}). At each step we eliminate via a Tietze transformation both a shaded generator (vertex) and a Dehn relation (incident edge). 

Each remaining Dehn relator $r$ corresponds to an edge $e$ of $\G$ not contained in the spanning tree $T$. Consider the unique based loop in $T \cup e$ with arbitrary orientation. The boundary relator of the loop is an element of the normal closure of the set of Dehn relators associated to the edges of the loop. 
All but the relator corresponding to $e$ are trivial when rewritten in terms of $U_1 \ldots, U_n, S_0$. Since the boundary relator is a consequence of $r_1, \ldots, r_n$, so is the Dehn relator $r$ corresponding to $e$. We discard it. 

It follows now that the link group $\pi_L$ has a presentation with generators $U_1 \ldots, U_n, S_0$ and boundary relators of the based boundaries of the regions $U_1, \ldots, U_n$. As discussed in Section \ref{Fox}, it follows that if $J$ is the Jacobian matrix of this presentation, then $J^{\a}$ is an Alexander matrix for $L$.


We proceed with a proof that the 2-reduced Jacobian matrix $J^\nu$ coincides with $(G\  {\bf 0})$, where $G$ is the reduced Goeritz matrix $G=G(D)$ and $\bf{0}$ is a column of zeroes. 

Recall that the rows and columns of $G$ correspond to the bounded unshaded regions $U_1, \ldots, U_n$. Each $U_i$ corresponds to a bounded region of the graph $\G$, and we can obtain the entries of the $i$th row of the reduced Goeritz matrix by following the boundary of this region in the counterclockwise direction. With respect to this counterclockwise orientation, the region $U_i$ will be on the left of each edge and an unshaded region $U_j$ will be on the right. If $i \neq j$ and the edge has positive weight, then we record $+1$ (resp. $-1$) in the $ii$ (resp. $ji$) entry. If $i \neq j$ and the edge has negative weight, then we record $-1$ (resp. $+1$) in the $ii$ (resp. $ji$) entry. 


\begin{lemma}
\label{nuimage}
The homomorphism $\tau$ applied to any shaded generator yields an odd power of $t$, while $\tau$ applied to any unshaded generator yields an even power of $t$. It follows that every shaded generator $S$ has 2-reduction $\nu(S)=-1$, while every unshaded generator $U_i$ has 2-reduction $\nu(U_i)=+1$. 
\end{lemma}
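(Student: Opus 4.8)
The plan is to compute the $\alpha$-images, and hence the $\tau$-images, of all the region generators by propagating outward from $U_0$, where $\alpha(U_0)=1$, and then to read off the parities of the exponents from the checkerboard colouring.

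First I would establish the basic step: if regions $R$ and $R'$ of $D$ are adjacent along an arc belonging to the $i$th component of $L$, then $\alpha(R\,\overline{R'})=t_i^{\pm 1}$ (the sign depending on orientations and on Fox's convention), so that $\tau(R\,\overline{R'})=t^{\pm 1}$. To see this, represent $R$ and $R'$ by Dehn loops through points of $R$ and of $R'$ chosen close to, and on opposite sides of, the shared arc; then $R\,\overline{R'}$ is freely homotopic in $\S^3\setminus L$ to a small loop encircling that arc once, i.e.\ to a meridian of the $i$th component, which determines its $\alpha$-image since $\alpha$ kills commutators. (This is the geometric content of Remark~\ref{Dehn}(4).) In particular the exponents of $\tau(R)$ and $\tau(R')$, as powers of $t$, have opposite parity.

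Next I would join an arbitrary region $R$ to $U_0$ by an arc $\g$ in the plane meeting $D$ transversally in $m$ interior points of arcs of $D$, none of them a crossing. Applying the basic step successively along $\g$, and using $\tau(U_0)=t^0$, gives $\tau(R)=t^{\varepsilon}$ with $\varepsilon\equiv m\pmod 2$. On the other hand, since each arc of $D$ separates a shaded region from an unshaded one, each of the $m$ transverse intersections switches the colour of the region through which $\g$ passes; as $U_0$ is unshaded, it follows that $R$ is unshaded precisely when $m$ is even. Hence $\tau(S)=t^{\mathrm{odd}}$ for every shaded region $S$ and $\tau(U)=t^{\mathrm{even}}$ for every unshaded region $U$. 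Since $\nu$ is the composite of $\tau$ with the ring map $t\mapsto -1$, this immediately yields $\nu(S)=-1$ and $\nu(U_i)=+1$.

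The one point needing care is the well-definedness of the parity in the last step: $m\bmod 2$ must be independent of the chosen arc $\g$, and must agree with the colour of $R$. Both facts reduce to the standard observation that the checkerboard colouring is the unique $2$-colouring of the faces of the $4$-valent plane graph underlying $D$ with the unbounded face unshaded --- equivalently, a face is shaded iff a generic ray from it to infinity meets $D$ an odd number of times. This is not a serious obstacle; alternatively one can dispense with $\g$ and induct directly on the faces of $D$ across a spanning tree of its dual graph, exactly as the colours themselves propagate.
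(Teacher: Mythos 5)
Your argument is essentially the paper's own proof, which simply states that the lemma "is verified recursively using part (4) of Remark \ref{Dehn}, starting with $U_0=1$": you propagate the value of $\tau$ from $U_0$ across successive arcs (each step multiplying by $t^{\pm1}$ by Remark \ref{Dehn}(4)) and match the parity of the exponent with the checkerboard colour change at each arc. Your version merely spells out the recursion as a transversal path from $U_0$ and adds the (correct, but routine) verification that the parity is well defined, so it is a correct, more detailed rendering of the same approach.
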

\begin{proof}The lemma is verified recursively using part (4) of Remark \ref{Dehn}, starting with $U_0=1$.
\end{proof}

Recall that the boundary relator $r_i=W_i \overline{S}_0$ is constructed by following the based boundary of $U_i$, taking into account the Dehn relation at each crossing. It follows that $W_i$ consists almost completely of unshaded generators; the only exception is a single appearance of either $S_0$ or $\overline{S}_0$. Regardless of whether it is $S_0$ or $\overline{S}_0$ that appears in $W_i$, Definition \ref{fdc} and Lemma \ref{nuimage} imply that $\nu(\partial r_i / \partial S_0)=0$.

Now consider an edge $e$ of the boundary $\partial U_i$, and give $e$ the edge direction consistent with the counterclockwise orientation of $\partial U_i$. Suppose $e$ has positive weight. Then the associated Dehn relation has the form $S' = U_i \overline{S} U_j$, where $S, S'$ are the initial and terminal vertices, respectively. It follows that the relator $r_i$ will have either the form $AU_iBS_0CU_jD \overline{S}_0$ or the form $A \overline{U_j} B \overline{S}_0C \overline{U_i} D \overline{S}_0$, where $A,B,C,D$ include only unshaded generators. Either way, Definition \ref{fdc} and Lemma \ref{nuimage} tell us that the contribution of the indicated appearance of $U_i$ to the value of $\nu(\partial r_i / \partial U_i)$ is $+1$, and the contribution of the indicated appearance of $U_j$ to the value of $\nu(\partial r_i / \partial U_j)$ is $-1$. These contributions are the same as the contributions of the edge $e$ to entries of the $i$th row of the Goeritz matrix $G$.

Similarly, if $e$ has negative weight then the relator $r_i$ will have either the form $AU_jBS_0CU_iD \overline{S}_0$ or the form $A \overline{U_i} B \overline{S}_0C \overline{U_j} D \overline{S}_0$, where $A,B,C,D$ include only unshaded generators. Once again, the contributions of the indicated appearances of $U_i$ and $U_j$ to the $i$th row of $J^{\nu}$ are equal to the contributions of $e$ to the $i$th row of $G$.

This completes the proof of Theorem \ref{main} for non-split diagrams. 


\begin{figure}
\begin{center}
\includegraphics[height=2 in]{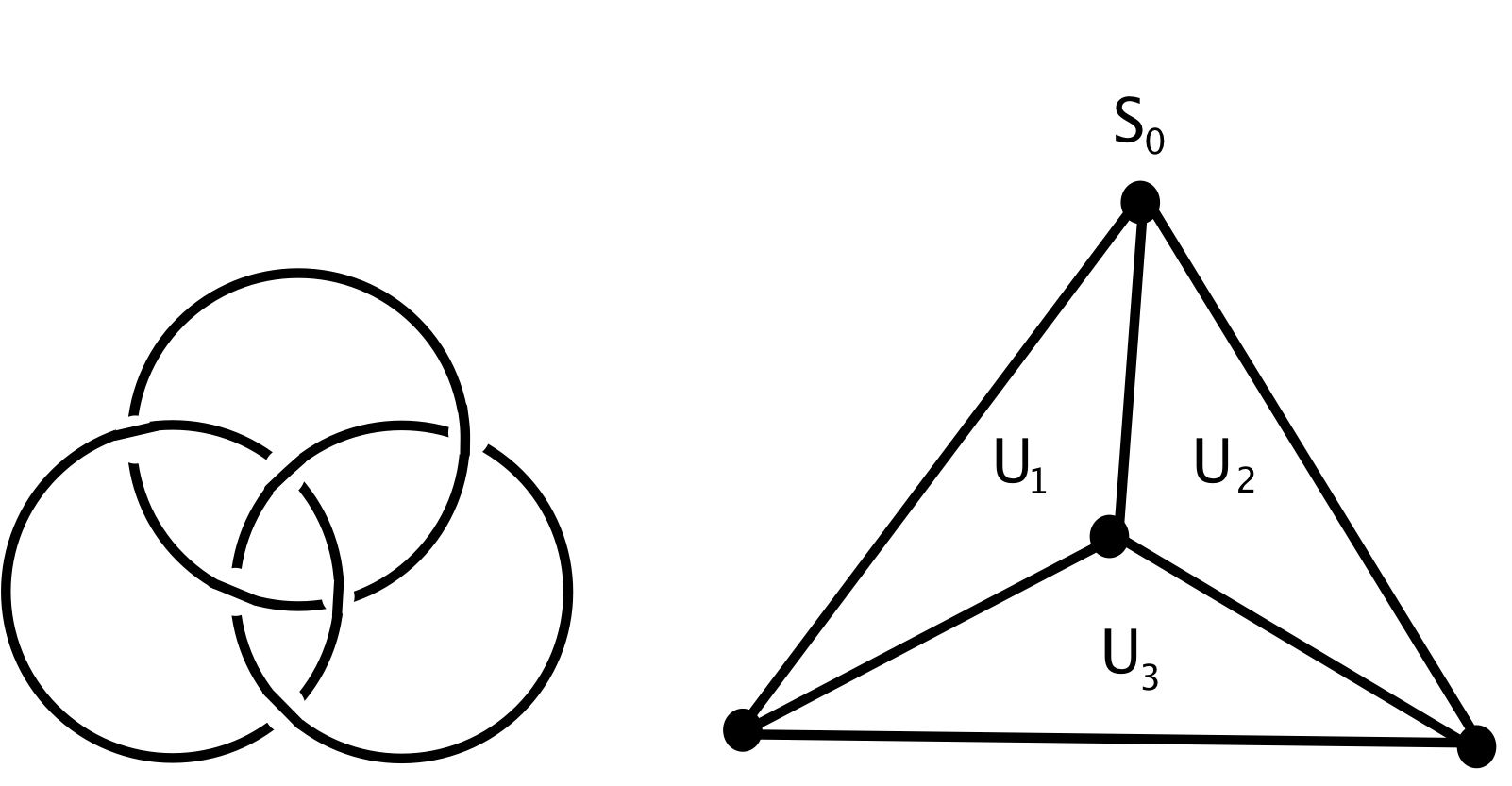}
\caption{Diagram $D$ of Borromean rings $L$ (left) and its checkerboard graph $\G$ (right)}
\label{brings}
\end{center}
\end{figure}

\begin{example} \label{BR} Consider the diagram $D$ of the Borromean rings and associated checkerboard graph $\G=\G(D)$ in Figure \ref {brings}. 
All edges have weight $+1$. The based boundary loop of $U_1$ yields the sequence of formal fractions
$\frac{U_1}{U_0}, \frac{U_1}{U_3}, \frac{U_1}{U_2}$ and the associated return value $W_1$ is the final element of the sequence: $$U_1 \overline{S}_0U_0,\quad U_1 \overline{U}_0 S_0 \overline{U}_1U_3,\quad U_1\overline{U}_3U_1 \overline{S}_0 U_0 \overline{U}_1 U_2.$$ 
\noindent The relator $r_1$ is $U_1\overline{U}_3U_1 \overline{S}_0 U_0 \overline{U}_1 U_2\overline{S}_0$. Similarly, $r_2$ and $r_3$ are, respectively, 
$$U_2 \overline{U}_3 U_2 \overline{S}_0 U_1 \overline{U}_2U_0\overline{S}_0 $$ 
$$U_1 \overline{U}_2 U_3 \overline{U}_1U_2\overline{S}_0U_1 \overline{U}_3U_0 \overline{U}_3 U_2\overline{S}_0 \overline{S}_2.$$
In order to get a  presentation for $\pi_L$ we must delete the occurrences of $U_0$. 
$$\pi_L \cong \< U_1, U_2, U_3, S_0 \mid U_1\overline{U}_3U_1 \overline{S}_0  \overline{U}_1 U_2\overline{S}_0, \   U_2 \overline{U}_3 U_2 \overline{S}_0 U_1 \overline{U}_2\overline{S}_0 , \ 
U_1 \overline{U}_2 U_3 \overline{U}_1U_2\overline{S}_0 U_1 \overline{U}_3 \overline{U}_3 U_2\overline{S}_0\rangle$$
The 2-reductions of the boundary relators of $r_1, r_2, r_3$ can also be computed using above sequences of fractions. For $r_1$ we have: 
$$U_1 -U_0, \quad U_1-U_0 + U_1 - U_3, \quad U_1-U_0 + U_1 - U_3 +U_1-U_2 = 3 U_1 - U_0-U_2-U_3.$$
Similarly, $r_2$ and $r_3$ yield, respectively, $3U_2 - U_1-U_3-U_0$ and $3U_3-U_1 -U_0-U_2$. To construct the 2-reduction of the Jacobian matrix for the presentation of $\pi_L$, we delete occurrences of $U_0$. The result is
$$\begin{pmatrix} 3 & -1 & -1 & 0  \\ -1 & 3 & -1 & 0 \\ -1 & -1 & 3 & 0 \end{pmatrix},$$ with the last column corresponding to $S_0$. This is the same $(G\  {\bf 0})$ matrix that results from the definition of $G$ (see Section \ref{goeritz}).
\end{example} 


Finally, we consider a general diagram $D$ of any link $L$. When $D$ is split, the checkerboard graph $\G=\G(D)$ is combinatorially well defined but does not contain complete information about $L$. 
(Consider, for example, unlink diagrams consisting of $\mu$ circles, some of which may be concentric.) As before, label the unbounded unshaded region $U_0$ and the remaining ones $U_1,\ldots,U_n$. For each component $\G_\lambda$ of the graph $\G$, $\lambda=0, \ldots, \beta-1$, we choose a shaded region $S_\lambda$ corresponding to a vertex of $\G_\lambda$.
We identify $U_0, U_1, \ldots, U_n, S_0, \ldots, S_{\b-1}$ and remaining shaded regions of $\Si$ with the generators of the Dehn presentation for $\hat \pi_L$ arising from the diagram $D$. 

Consider the surface $\Si$ in the plane consisting of the shaded regions of $D$ together with marked bands replacing the crossings between adjacent regions. Each marking is $+1, -1$ according to the Goeritz index of the crossing, as in Figure \ref{index}.
Each component $\Si_\lambda$ corresponds to a graph component $\G_\lambda$. 
The group $\pi_1(\Si_\lambda, S_\lambda)$ is free, generated by embedded loops based at $S_\lambda$ that run counter-clockwise around the holes which correspond to the unshaded regions $U_{\lambda,1}, \ldots, U_{\lambda, n_\lambda}$ exterior to the surface. Each loop determines a directed cycle graph with vertices corresponding to $S_\lambda$ and other shaded regions, and edges corresponding to  traversed bands. As before, we label edges with weights $+1, -1$ according to the Goeritz index of the crossing. We also label the left- and right-hand sides of each directed edge with symbols of the unshaded regions $U_j, U_k$ that appear on the those sides of the band. Then using Figure \ref{boundary} we define the \textit {return value} $W_{\lambda, i}$ of the based loop to be the word in $U_0, \ldots, U_n, S_\lambda$ obtained by following the loop around. (If the loop  avoids crossings then the return value is $1$.) We define the \textit {boundary relator} $r_{\lambda, i}$ to be $W_{\lambda, i}\overline{S_\lambda}$.  And as before, the boundary relator of {\sl any} based loop of $\Si_\lambda$ is contained in the normal closure of the $r_{\lambda, i}$.

Each component $\Si_\lambda$ has a combinatorially defined checkerboard graph $\G_\lambda$ with vertices and edges corresponding to shaded regions and crossings. We select a spanning tree $T_\lambda$ for $\G_\lambda$ and use it and Tietze transformations to eliminate shaded generators other than $S_\lambda$ as well as the Dehn relators corresponding to its edges. The same argument as in the case of non-split diagrams shows that the remaining Dehn relators, rewritten in terms of $U_1, \ldots, U_n, S_0, \ldots, S_{\b-1}$, are consequences of the boundary relators $r_{\lambda, 1}, \ldots, r_{\lambda, n_\lambda}$. We delete them from the presentation. 

We have shown that $\hat \pi_L$ has a presentation with generators $U_0, U_1, \ldots, U_n, S_0, \ldots, S_{\b-1}$ and relators $r_{0, 1}, \ldots, r_{0, n_0}, \ldots, r_{\b-1, 1}, \ldots, r_{\b-1, n_{\lambda-1}}$. Adjoining the relator $U_0$ yields a presentation for $\pi_L$. 

The unshaded regions of $D$ that are non-simply connected border different components of the surface $S$, and hence the 2-reduced Jacobian of the presentation for $\pi_L$ that we have described can differ from the reduced Goeritz matrix $G$ in the corresponding rows and columns. 

We rectify the problem by adjusting our presentation of $\pi_L$. Consider a bounded non-simply connected unshaded region. It is a simply-connected region for some component $S_{\lambda_0}$ that contains one or more components $S_{\lambda_1}, \ldots, S_{\lambda_s}$.
Replace the boundary relator $r_{\lambda_0}$ in the presentation just obtained with 
$r_{\lambda_0}\tilde r_{\lambda_1}^*\cdots \tilde r_{\lambda_s}^ *$. (The order of the relations will not matter. Recall that $\tilde r$ is the boundary relation of the outermost loop of the component, and ${}^*$ indicates that the loop in traversed in the clockwise direction.) We repeat the procedure for each bounded non-simply connected unshaded region of $D$.

That the new presentation   is equivalent to the one with which we began can easily be seen by considering the relations from innermost components of $\Si$ and working outward. Any relation that we append is a consequence of a previous relator. 

It is straightforward to see that the leading principal minor of $J^\nu$ of the new presentation coincides with the reduced Goeritz matrix.

\begin{figure}
\begin{center}
\includegraphics[height=2 in]{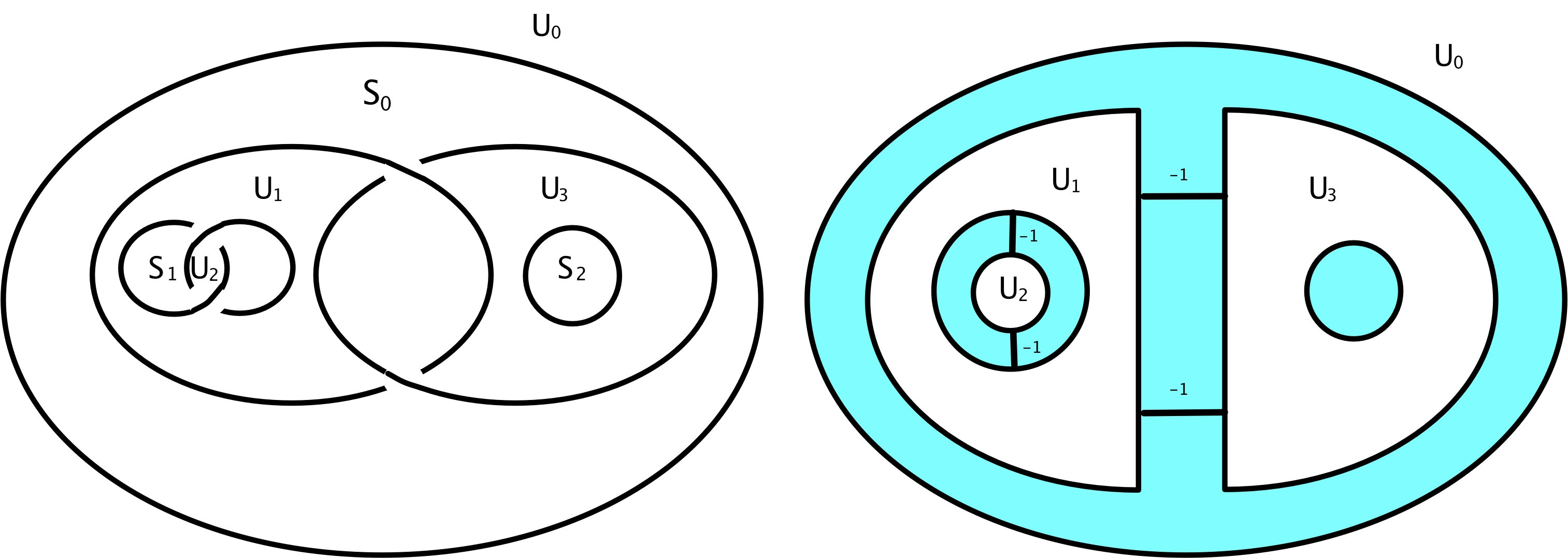}
\caption{Split diagram $D$ of link $L$ (left) and surfaces $\Si_0, \Si_1, \Si_2$ (right) }
\label{Split}
\end{center}
\end{figure}

\begin{example} Consider the split diagram $D$ of the 6-component link $L$ in Figure \ref{Split}. The surface $\Si$ has three components $\Si_0, \Si_1$ and $\Si_2$ containing shaded regions $S_0, S_1$ and $S_2$, respectively. 
The fundamental group $\pi_1(\Si_0, S_0)$ is freely generated by two based loops, one running around the left-hand side and the other along the right. Their return values are easily computed using cycle graphs, each of length two and edges with weight $-1$. 
The first boundary is $U_3 \overline{U}_1 S_0 \overline{U}_3 U_1$. 
The second is $U_1 \overline{U}_3 S_0 \overline{U}_1 U_3$.
The surface $\Si_1$ has an infinite cyclic fundamental group, and the boundary of a based loop generator is 
$U_1 \overline{U}_2 S_1 \overline{U}_1 U_2.$ The surface $\Si_2$ is simply connected, and we do not need to compute any boundary for it. Putting all of this together, we have:
$$\pi_L \cong \< U_1, U_2, U_3, S_0, S_1, S_2 \mid 
U_3 \overline{U}_1 S_0 \overline{U}_3 U_1\overline{S}_0, \ 
U_1 \overline{U}_3 S_0 \overline{U}_1 U_3\overline{S}_0, \ 
U_1 \overline{U}_2 S_1 \overline{U}_1 U_2\overline{S}_1 \>.$$
The unshaded region $U_1$ is non-simply connected. We modify its assocated relator in order to produce a presentation that will yield the reduced Goeritz matrix. 

The unshaded region $U_3$ is also non-simply connected. However, its associated relator needs no modification since the inner boundary of the region has trivial boundary. 

The modified presentation of $\pi_L$ is: 
$$\pi_L \cong \< U_1, U_2, U_3, S_0, S_1, S_2 \mid 
U_3 \overline{U}_1 S_0 \overline{U}_3 U_1\overline{S}_0U_2 \overline{U}_1 S_1 \overline{U}_2 U_1\overline{S}_1, \ 
U_1 \overline{U}_3 S_0 \overline{U}_1 U_3 \overline{S}_0,\ 
U_1 \overline{U}_2 S_1 \overline{U}_1 U_2\overline{S}_1 \>.$$

Direct computation using the free differential calculus shows that the 2-reduced version of the Jacobian matrix of the last presentation is

$$J^\nu = \begin{pmatrix} -4 & 2 & 2 & 0 &0 &0\\ 2 & 0 & -2& 0 &0 &0\\ 2 & -2 & 0& 0 &0 &0 \end{pmatrix}.$$ 

The submatrix consisting of the first three columns is the reduced Goeritz matrix $G$ associated to the shaded diagram $D$.

\end{example}

\section{Special diagrams} \label{special} Consider an oriented link $L$ with diagram $D$. As before, we checkerboard shade the diagram so that the unbounded region remains unshaded. In this section we  assume that the diagram is \textit{special}, that is, its shaded regions form an oriented spanning surface $F$ for the link. (Every link has a special diagram. See, for example, \cite{BZ03}.) Using arc orientations and the right-hand rule, we label each shaded region by  $+$ or $-$, regarding regions labeled $+$ as one side of the surface, regions labeled $-$ as the other. 

We assume also that $D$ is a non-split diagram. As in Section \ref{proof}, we consider the plane checkerboard graph $\G$, identifying its vertices with shaded regions of $D$ and its bounded faces with the bounded unshaded regions $U_1, \ldots, U_n$ of $D$.

Let $S_0$ be a shaded vertex labeled $+$. For each $U_i$ we select  a base path from $S_0$ to a vertex labeled $+$ on the boundary $\partial U_i$. By Theorem \ref{main} the link group $\pi_L$ has a presentation of the form 
\begin{equation}\label{pr1}\<S_0, U_1, \ldots, U_n \mid W_1 \overline S_0, \ldots, W_n\overline S_0 \>.
\end{equation}

Since $D$ is special, the length of every boundary $\partial U_i$ is even. The return value $W_i$ has the form $A_iS_0\overline{B}_i$, where $A_i, B_i$ are words in $U_1, \ldots, U_n$, each having even length, and the presentation (\ref{pr1}) can be rewritten as: 

\begin{equation}\label{pr2}\<S_0, U_1, \ldots, U_n \mid \overline S_0A_1S_0=B_1, \ldots, \overline S_0 A_n S_0=B_n \>.
\end{equation} 

As in Section \ref{proof} the words $A_i, B_i$ can be read from the sequence of formal fractions recorded as we travel along the based boundary of $U_i$. Regard the $A_i, B_i$ as words in the free group $F_n$ on the generating set $\{U_1, \ldots, U_n\}$, and
let $U_1^{a_{i,1}}\cdots U_n^{a_{i,n}}$ be the image of $A_i$ in the abelianization $F_n/F'_n$.  Define $A$ to be the integral $n \times n$ matrix $(a_{i.j})$. Define $B$ similarly as $(b_{i,j})$. 

Consider the Seifert matrix $H^+$ with $i,j$-entry equal to the linking number $Lk(\partial U_i, \partial U_j^+)$. Here we regard $U_i, U_j$ as oriented curves in the surface $F$, and $\partial U_j^+$ as a
copy of $\partial U_j$ pushed off the surface in the direction of the positive normal vector. Contributions to linking numbers by base paths cancel and so we ignore base paths. We consider also the Seifert matrix  $H^-$ similarly defined but
with $i,j$-entry $Lk(\partial U_i, \partial U_j^-)$, where $\partial U_j^-$ is obtained by pushing off in the negative normal direction.

\begin{theorem} \label{seifert} Let $D$ be a non-split, special diagram of a link $L$. Then $\pi_L$ 
has a presentation of the form $\<S_0, U_1, \ldots, U_n \mid \overline S_0A_1S_0=B_1, \ldots, \overline S_0 A_nS_0=B_n \>,$ where the matrix $B$ (resp. $A$) is equal to the Seifert matrix $H^+$ (resp. $H^-$) of the diagram $D$. If additionally $D$ is alternating, then 
the presentation describes $\pi_L$ as an HNN extension with stable letter $S_0$. 
\end{theorem}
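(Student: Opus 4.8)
The plan is to prove the two halves of Theorem~\ref{seifert} in turn. Write $F$ for the shaded spanning surface of $D$ and $M$ for the complement $\S^3 \setminus F$ (equivalently, the result of cutting $\S^3$ along $F$); note $M \subset \S^3 \setminus L$ since $L \subset F$, so there is an induced map $\pi_1 M \to \pi_L$. The first task --- identifying $B$ with $H^+$ and $A$ with $H^-$ --- I would approach by first unwinding what the words $A_i$ and $B_i$ from the proof of Theorem~\ref{main} \emph{are} geometrically. The recipe producing the return value $W_i = A_i S_0 \overline{B}_i$ walks once around $\partial U_i$ and, band by band, records via the formal fractions $\frac{a_k}{b_k}$ which unshaded regions lie to the left and right; this is exactly the data describing the two push-offs $\partial U_i^{+}$ and $\partial U_i^{-}$ of the cycle $\partial U_i \subset F$ as based loops in $M$, expressed in the generators $U_1,\dots,U_n$. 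So I would check, tracking conventions, that $A_i$ represents $i_{+}(\partial U_i)$ and $B_i$ represents $i_{-}(\partial U_i)$, where $i_{\pm}\colon\pi_1 F \to \pi_1 M$ are the two inclusions of the surface; then the relation $\overline S_0 A_i S_0 = B_i$ is precisely the ``push across $F$'' identity $t^{-1} i_{+}(x)\,t = i_{-}(x)$ with $x = \partial U_i$ and $t = S_0$, the latter being a loop meeting $F$ once (it threads the single sheet $S_0$).

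Granting this, the matrix identification becomes a homology computation that uses neither alternation nor the loop theorem. The loops $U_1,\dots,U_n$ form a basis of $H_1(M)\cong\Z^{n}$, and under the linking pairing of $\S^3$ this basis is dual to the system of cycles $\partial U_1,\dots,\partial U_n$ in $F$: indeed $Lk(U_j,\partial U_k)=\pm\delta_{jk}$, since the loop $U_j$ pierces the plane only inside region $U_j$ and inside $U_0$. Hence the $j$th coordinate of the abelianization of $A_i$ equals $Lk(\partial U_i^{+},\partial U_j)=Lk(\partial U_i,\partial U_j^{-})=H^{-}_{i,j}$, so $A = H^{-}$; symmetrically $B = H^{+}$. (The base-path choices in Theorem~\ref{main} affect $A_i, B_i$ only up to conjugation, and their contributions to linking numbers cancel, so $A$ and $B$ are well defined.) As an independent cross-check I would redo this by hand: abelianize the explicit return words~(\ref{evenrelator}) and match the resulting signed edge-counts against the combinatorial Seifert matrix of~\cite{SW18}, using that specialness of $D$ forces the crossing types met around $\partial U_i$ to synchronize with the even/odd positions, so that the parity-dependent signs in~(\ref{evenrelator}) collapse to genuine linking numbers.

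For the HNN statement, add the hypothesis that $D$ is alternating. Since $D$ is special, $F$ is exactly the surface that Seifert's algorithm produces from $D$ (shaded regions become Seifert disks, crossings become bands), hence a free spanning surface: $\pi_1 M$ is free on $U_1,\dots,U_n$, of rank $n$ equal to the first Betti number of $F$ (indeed the exterior $\S^3\setminus N(F)$ is a handlebody). Since $D$ is also alternating, classical results of Crowell and Murasugi (see, e.g., \cite{BZ03}) say this surface realizes the genus of $L$; a minimal-genus Seifert surface is incompressible, so $\pi_1 F \to \pi_L$ is injective by the loop theorem, and therefore each $i_{\pm}\colon\pi_1 F \to \pi_1 M$ is injective as well, because composing it with $\pi_1 M \to \pi_L$ recovers that injective map. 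The van Kampen decomposition of $\S^3 \setminus L$ obtained by cutting along $F$ now exhibits $\pi_L$ as the HNN extension of $\pi_1 M$ with associated subgroups $i_{+}(\pi_1 F)$ and $i_{-}(\pi_1 F)$, associated isomorphism $i_{+}(\partial U_i)\mapsto i_{-}(\partial U_i)$, and stable letter any loop meeting $F$ once. Feeding in the free basis $U_1,\dots,U_n$ of $\pi_1 M$, the free basis $\partial U_1,\dots,\partial U_n$ of $\pi_1 F$ (from Theorem~\ref{main}), and $t = S_0$, the resulting HNN presentation is verbatim the presentation $\<S_0, U_1, \ldots, U_n \mid \overline S_0A_1S_0=B_1, \ldots, \overline S_0 A_nS_0=B_n \>$.

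The step I expect to be the main obstacle is the geometric reading in the first paragraph: showing, with every sign correct, that the combinatorial words $A_i, B_i$ are precisely the push-off classes $i_{\pm}(\partial U_i)$ --- equivalently, getting the abelianization of~(\ref{evenrelator}) to equal the Seifert matrix on the nose, not merely up to sign or transpose. Specialness of $D$ is exactly the hypothesis that makes this come out right, and keeping track of how the $+/-$ labels on shaded regions, the Goeritz indices, and the even/odd positions along $\partial U_i$ interact is the bulk of the work. By comparison the HNN half is routine once the classical minimal-genus fact and the freeness of a Seifert-algorithm surface are invoked; its only delicate point is the verbatim matching of the abstract HNN presentation with ours, which rests on $U_1,\dots,U_n$ being the particular free basis of $\pi_1 M$ that arises in the cut-along-$F$ picture.
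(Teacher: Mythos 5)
Your overall strategy is sound, and for the HNN half it is genuinely different from the paper's. The paper never leaves combinatorics and elementary algebra: it proves $B=H^+$ by comparing the abelianized return words with the dotted-corner formulas (\ref{smatrix}) for the Seifert matrix (first for all Goeritz indices $+1$, then observing that a crossing change merely swaps numerator and denominator while leaving the dot in place), gets $A=H^-$ by reversing orientation, and then obtains the HNN structure purely algebraically: for a special alternating diagram $\det H^{\pm}\neq 0$ (Prop.~13.24 of \cite{BZ03}), so the images of $A_1,\dots,A_n$ in $F_n/F_n'$ span a finite-index subgroup, which forces $A_1,\dots,A_n$ (and likewise $B_1,\dots,B_n$) to freely generate rank-$n$ free subgroups, whence $A_i\mapsto B_i$ is an isomorphism. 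Your route instead goes through the topology: Crowell--Murasugi minimality of the Seifert-algorithm surface of an alternating diagram, incompressibility of minimal-genus surfaces, the loop theorem, the handlebody structure of the complement of $F$, and the van Kampen/HNN splitting along $F$. That buys the geometric picture (essentially the paper's closing Remark), but it is much heavier machinery, and it needs strictly more than the paper uses: you must identify $A_i$ and $B_i$ with the push-off classes $i_{\pm}(\partial U_i)$ \emph{in the free group} $\pi_1 M$, not merely in homology, and you must verify that $U_1,\dots,U_n$ is the free basis of $\pi_1 M$ and $\partial U_1,\dots,\partial U_n$ the free basis of $\pi_1 F$ for which the van Kampen presentation matches the given one verbatim.

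That identification is exactly where the gap lies. For the first half of the theorem you write that you ``would check, tracking conventions,'' that $A_i$ represents $i_{+}(\partial U_i)$ and $B_i$ represents $i_{-}(\partial U_i)$, and you yourself flag that getting every sign, the $+/-$ labels on shaded regions, the Goeritz indices, and the placement of $S_0$ to come out right is ``the bulk of the work''; but that bookkeeping is never carried out, and it is the entire content of the paper's proof (done there via the $\epsilon_i(c)$-dot device and formulas (\ref{smatrix})). The linking-duality step $Lk(U_j,\partial U_k)=\pm\delta_{jk}$ is correct but only transfers the problem: the sign there, the choice of which of $A_i,B_i$ corresponds to which push-off, and whether the answer is $H^{\pm}$ or its transpose are precisely what must be pinned down, and the theorem asserts equality on the nose, not up to sign or transpose. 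So as written the proposal is a reasonable plan whose pivotal verification is deferred; filling it in along your own ``cross-check'' (abelianize (\ref{evenrelator}) and compare with (\ref{smatrix})) essentially reproduces the paper's argument, after which your topological HNN argument is a valid, more geometric alternative to the paper's determinant argument --- provided you also upgrade the identification of $A_i,B_i$ from homology to the level of $\pi_1$, which your HNN half requires but the paper's does not.
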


\begin{figure}
\begin{center}
\includegraphics[height=3 in]{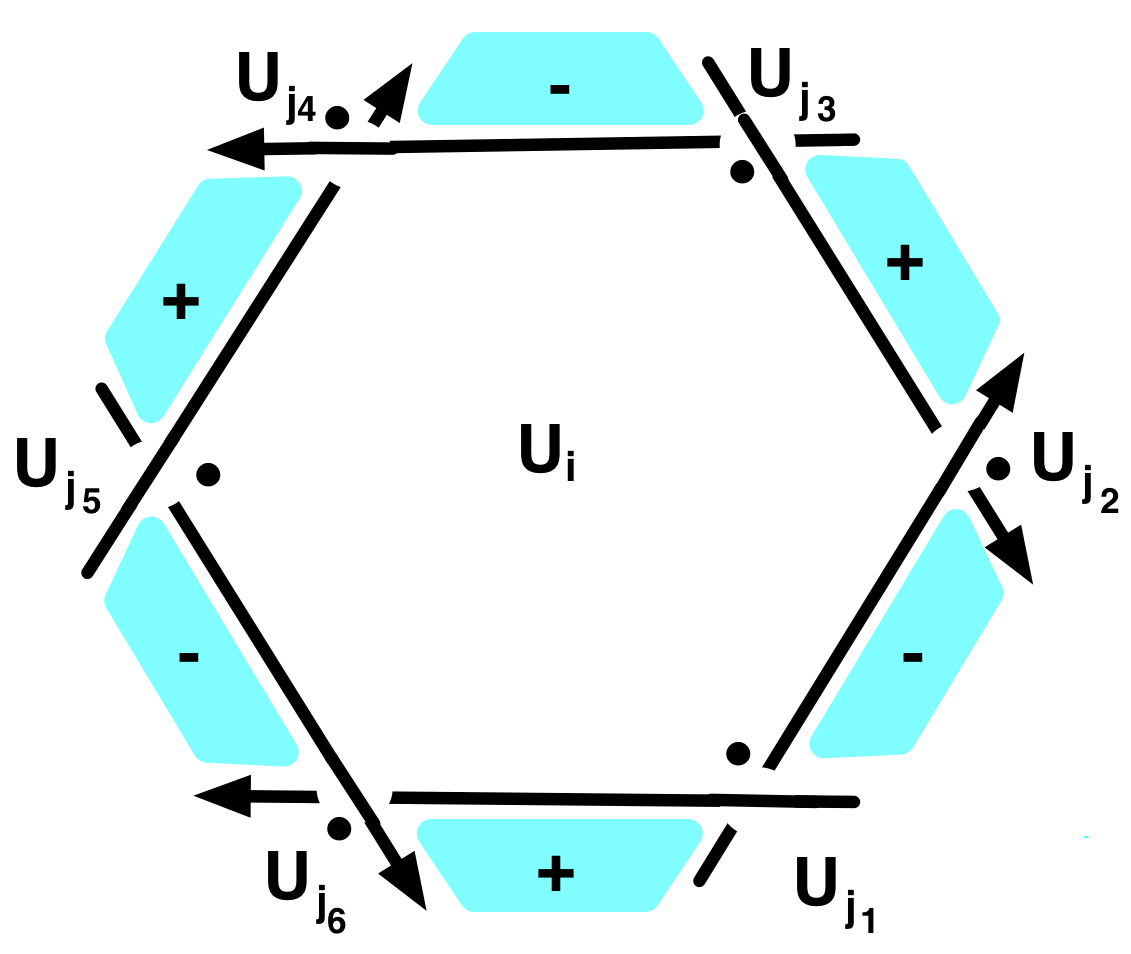}
\caption{Unshaded region $U_i$ of special alternating diagram }
\label{Seifert}
\end{center}
\end{figure}

\begin{proof} The Seifert matrix $H^+$ can be computed directly from the diagram $D$. Begin by placing a dot in the corners of unshaded regions if they appear on the left of an under-crossing arc
with respect to its preferred orientation, as illustrated in Figure \ref{Seifert}. At each crossing  $c$ of the diagram a dot will appear in exactly one unshaded region. Define 
$\e_i(c)=1$ if the dot appears in $U_i$, zero otherwise. We write $c \in \partial U$ if the crossing is incident to the region $U$. Then  
\begin{equation}\label{smatrix} H^+_{i,i}= \sum_{c \in \partial U_i} \eta(c) \e_i(c), \quad H^+_{i,j}=
-\sum_{c \in \partial U_i \cap \partial U_j} \eta(c)\e_j(c), \ ( i\ne j). \end{equation}
(See page 231 of  \cite{BZ03}. The reader is warned that the second summation there is missing the negative sign. The proof, however, is correct.) 

We can use formulas (\ref{smatrix}) to find the $i$th row of the Seifert matrix $H^+$ by imagining that we are standing in the center of $U_i$. The diagonal term $H^+_{i,i}$ is  the number of dotted corners that we see, each weighted by the Goeritz index $\eta(c)=\pm 1$ of the nearby crossing. Each undotted corner, diagonally across from some region $U_j$, contributes $\eta(c)$ to the $j$th column. 
In Figure \ref{Seifert}, for example, where all Goeritz indices are $1$, we have $H^+_{i,i}=3$ and $H^+_{i, j_k} = -1$ if $k = 2,4,6$; other entries $H^+_{i,j}$ are zero. 

Now consider the based boundary loop of $U_i$. First assume that all Goeritz indices are $1$. Beginning at a $+$ vertex and traveling around the loop, we record a sequence of formal fractions 
\begin{equation} \frac{\dot U_i}{U_{j_1}}\frac{U_i}{\dot U_{j_2}} \cdots \frac{\dot U_i}{U_{j_{2l-1}}}\frac{U_i}{\dot U_{j_{2l}}},    \end{equation}
where $l$ is the length of $\partial U_i$ and $\cdot$ indicates that a dot is found in that region of the corner.  The word $A_i$ is the zig-zag alternating product of numerators and denominators, beginning with the numerator of the last term: 
$$A_i= U_i\overline{U}_{j_{2l-1}}U_i\overline{U}_{j_{2l-3}}\cdots U_i\overline{U}_{j_1}.$$
Likewise, $B_i$ is the zig-zag alternating product of numerators and denominators, beginning with the inverse of the denominator of the {\sl last} term: 
$$B_i= \overline{U}_{j_{2l}}U_i\overline{U}_{j_{2l-2}}U_i\cdots \overline{U}_2 U_i.$$

When we construct $B_i$ from the based boundary of $U_i$, each crossing from the base path is encountered twice, once before the based boundary loop traverses $\partial U_i$ and once after. The two encounters are in opposite directions, so according to formula (\ref{evenrelator}), the two encounters contribute opposite powers of the same $U_j$ to the word $B_i$. It follows that the contributions from the base path to $B_i$ cancel in the abelianization of the free group $F_n$, and we see immediately that the contributions to the $i$th row of $H^+$ agree with those given by formulas (\ref{smatrix}). 

We have considered only diagrams with crossings having Goeritz index 1. Changing a crossing flips the corresponding numerator and denominator (but leaves the dot in place). It is easy to see that the two methods of computation continue to agree. Hence $B$ is equal to the Seifert matrix $H^+$. 

If we reverse the orientation of the diagram $D$, then the new Seifert matrix that we obtain 
is $H^-$. It is equal to transpose of $H^+$.  
The effect on the sequence of formal fractions arising from the checkerboard graph is to 
move each dot, from numerator to denominator or vice versa. We see that $A$ is equal to 
$H^-$. 

This completes the proof of the first statement of Theorem \ref{seifert}. 
For a proof of the second statement assume that $D$ is a special alternating diagram. (A special diagram is alternating if and only if all Goeritz indices have the same value.) The sets $\{A_1, \ldots, A_n\}$ and $\{B_1, \ldots, B_n\}$ generate subgroups $gp(A_1, \ldots, A_n)$ and $gp(B_1, \ldots, B_n)$ of the free group $F_n$, respectively. In order to prove that the presentation (\ref{BR}) expresses $\pi_L$ as an HNN extension with stable letter $S_0$, we must show that the homomorphism $\phi$ taking $A_i$ to $B_i$, for each $i$, is an isomorphism. It suffices to show that $A_1, \ldots, A_n$ and $B_1, \ldots, B_n$ freely generate $gp(A_1, \ldots, A_n)$ and $gp(B_1, \ldots, B_n)$, respectively. 

Since $D$ is a special alternating diagram, the determinants of $H^+$ and $H^-$ are nonzero (see Prop. 13.24 of \cite{BZ03}). Hence $A_1, \ldots, A_n$ generate a subgroup of 
$F_n/F'_n\cong \Z^n$ with finite index (equal to the absolute value of the determinant). It follows that $A_1, \ldots, A_n$ must freely generate  $gp(A_1, \ldots, A_n)$. The same argument applies to $B_1, \ldots, B_n$. 
\end{proof}

Here is a direct consequence of Theorem \ref{seifert}, analogous to the last assertion of Theorem \ref{main}.
\begin{cor} Let $D$ be a non-split, special diagram of a link $L$, let $H^+,H^-$ be the corresponding Seifert matrices defined above, and let $\widetilde H=H^- - tH^+$. If $\tau$ is the map defined in Remark \ref{Fox} then $L$ has an Alexander matrix $J$ such that $J^{\tau}=(\widetilde H \  {\bf 0})$, where $\bf 0$ is a column of zeroes.
\end{cor}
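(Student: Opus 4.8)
The plan is to run the free differential calculus on the presentation that Theorem~\ref{seifert} already hands us. Write it as $\<S_0,U_1,\dots,U_n\mid r_1,\dots,r_n\>$ with $r_i=A_iS_0\overline B_i\overline S_0$ (which encodes the return-value identity $A_iS_0\overline B_i=S_0$), and let $J$ be its Jacobian matrix. The proof of Theorem~\ref{main} already establishes that $J^{\alpha}$ is an Alexander matrix of $L$, so it is this matrix that plays the role of ``$J$'' in the corollary, and it remains only to show that its specialization satisfies $J^{\tau}=(\widetilde H\ {\bf 0})$, where the last column corresponds to $S_0$. Thus the whole proof reduces to computing $\tau(\partial r_i/\partial S_0)$ and $\tau(\partial r_i/\partial U_j)$.

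The first step is to identify the $\tau$-images of the generators. Since $D$ is special, the shaded regions assemble into a Seifert surface $F$ for $L$. The loop representing a bounded unshaded generator $U_j$ crosses the plane of $D$ only at an interior point of $U_j$ and an interior point of the unbounded region $U_0$, and neither of these lies on $F$; hence this loop is disjoint from $F$, so $\mathrm{lk}(U_j,L)=0$ and, by the convention of Remark~\ref{FDC}, $\tau(U_j)=1$ for $j=1,\dots,n$. The loop representing $S_0$ instead meets $F$ transversally in the single point where it descends through the $+$-labelled region $S_0$, giving $\mathrm{lk}(S_0,L)=-1$ and $\tau(S_0)=t$. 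Since $A_i$ and $B_i$ are words in $U_1,\dots,U_n$ only, it follows that $\tau$ restricts to the augmentation on $\Z[F(U_1,\dots,U_n)]$; in particular $\tau(A_i)=\tau(B_i)=1$ and $\tau(A_iS_0\overline B_i)=t$.

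Now the Fox derivatives are routine. By the product and chain rules, $\partial r_i/\partial S_0=A_i-A_iS_0\overline B_i\overline S_0=A_i-r_i$, and since $r_i$ is trivial in $\pi_L$ we get $\tau(\partial r_i/\partial S_0)=\tau(A_i)-1=0$, which is the claimed zero column. Likewise $\partial r_i/\partial U_j=\partial A_i/\partial U_j-A_iS_0\overline B_i\,(\partial B_i/\partial U_j)$, so
\[
\tau(\partial r_i/\partial U_j)=\tau(\partial A_i/\partial U_j)-t\,\tau(\partial B_i/\partial U_j).
\]
Because $\tau$ is the augmentation on words in the $U$'s, and Definition~\ref{fdc} shows that the augmentation of $\partial w/\partial x$ is the exponent sum of $x$ in $w$, this expression equals $a_{i,j}-tb_{i,j}$, with $A=(a_{i,j})$ and $B=(b_{i,j})$ the matrices of Section~\ref{special}. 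Theorem~\ref{seifert} identifies $A=H^-$ and $B=H^+$, so $\tau(\partial r_i/\partial U_j)=H^-_{i,j}-tH^+_{i,j}=\widetilde H_{i,j}$, and therefore $J^{\tau}=(\widetilde H\ {\bf 0})$.

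I expect the one genuinely delicate point to be the sign giving $\tau(S_0)=t$ (that is, $\mathrm{lk}(S_0,L)=-1$ rather than $+1$): this must be read off from Fox's linking-number convention together with the right-hand-rule $\pm$-labelling of shaded regions fixed in Section~\ref{special} and the choice of $S_0$ as a $+$-region, which is exactly the sort of orientation bookkeeping already needed for the last assertion of Theorem~\ref{main}. If the opposite sign were forced one would instead obtain $H^--t^{-1}H^+$, which presents the same Alexander module (multiply a column by the unit $t$) but does not match the stated normalization, so getting this sign right is essential.
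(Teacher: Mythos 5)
Your proposal is correct and takes essentially the same route as the paper: work with the presentation $\<U_1,\ldots,U_n,S_0\mid A_iS_0\overline B_i\overline S_0\>$ supplied by Theorem~\ref{seifert}, use $\tau(U_j)=1$ and $\tau(S_0)=t$, and apply the Fox calculus to get $\tau(\partial r_i/\partial S_0)=\tau(A_i)-\tau(A_iS_0\overline B_i\overline S_0)=0$ and $\tau(\partial r_i/\partial U_j)=a_{i,j}-tb_{i,j}=\widetilde H_{i,j}$ via $A=H^-$, $B=H^+$. The only (minor) divergence is that you justify $\tau(U_j)=1$, $\tau(S_0)=t$ geometrically by intersecting the Dehn loops with the Seifert surface, whereas the paper deduces the same values combinatorially from part (4) of Remark~\ref{Dehn} together with $U_0=1$; your explicit derivative computations simply spell out what the paper leaves implicit.
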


\begin{proof}
Theorem \ref{seifert} tells us that $\pi_L$ has a presentation 
$\<U_1, \ldots, U_n, S_0 \mid r_1, \ldots, r_n \>$, where $r_i=A_iS_0 \overline B_i \overline S_0$. Since $D$ is special and $U_0=1$, part (4) of Remark \ref{Dehn} tells us that every Dehn generator $U$ corresponding to an unshaded region has $\tau(U)=1$. Also, the Dehn generator $S$ corresponding to a shaded region labeled $+$ (resp. $-$) has $\tau(S)=t$ (resp. $\tau(S)= t^{-1}$). In particular, $\tau(S_0)=t$. 

Now, let $J$ be the Alexander matrix obtained from the presentation $\<U_1, \ldots, U_n, S_0\mid r_1, \ldots, r_n \>$ using the free differential calculus, as in Section \ref{Fox}. For $1 \leq i \leq n$ the image under $\tau$ of the $i$th entry of the last column (the column corresponding to $S_0$) is $\tau(A_i) - \tau(A_i S_0 \overline B_i \overline S_0)= 0$. The fact that the first $n$ columns of $J^{\tau}$ are the same as the columns of $\widetilde H$ follows from the equalities $A=H^-,B=H^+$ of Theorem \ref{seifert}.
\end{proof}

\begin{remark} More can be said about a link with a special non-split alternating diagram. It is known that the Seifert surface $F$ formed by its shaded regions has minimal genus, and splitting $\S^3$ along $F$ produces a handlebody ${\cal H}$ of genus $n$ \cite{Me84}. The boundary of ${\cal H}$ contains two copies $F_+, F_-$ of $F$ with $F_+ \cap F_- = L$. The infinite cyclic cover of $\S^3 \setminus L$ corresponding to the homomorphism $\tau:\pi_1(\S^3\setminus L) \to \Z \cong \<t\mid \>$ sending
each oriented meridian of $L$ to $t$ can be constructed by gluing countably many copies ${\cal H}_\nu$ of the handlebody end-to-end, matching $F_+\subset {\cal H}_\nu$ with $F_- \subset {\cal H}_{\nu +1}$. With appropriate choice of basepoint and base paths the gluing map induces a monomorphism of fundamental groups that corresponds to the HNN amalgamation map $\phi$ in the proof of 
Theorem \ref{seifert}.
\end{remark}

%
%
%

\bigskip

\ni Department of Mathematics\\
\ni Lafayette College\\Easton PA 18042\\
\ni Email: traldil@lafayette.edu\\ 

\ni Department of Mathematics and Statistics,\\
\ni University of South Alabama\\ Mobile, AL 36688 USA\\
\ni Email: silver@southalabama.edu\\ swilliam@southalabama.edu

\begin{thebibliography}{1}

\bibitem{BZ03} G. Burde and H. Zieschang, Knots, 2nd ed., Walter de Gruyter, Berlin, 2003.


\bibitem{Fo53} R.~H. Fox, Free differential calculus. I, {\sl Annals of Math. {\bf 57}(3)}, 1953.

\bibitem{Fo54} R.~H. Fox, Free differential calculus. II, {\sl Annals of Math. {\bf 59}(2)}, 1954.

\bibitem{Fo62} R.~H. Fox, A quick trip through knot theory, in Topology of 3-manifolds and related topics, Prentice-Hall, New Jersey, 1962, pp. 120 --167; reprinted by Dover Publications, 2010.

\bibitem{CF63} R.~H. Crowell and R.~H. Fox, An introduction to knot theory, Ginn and Co. 1963, or: 
Grad. Texts in Math. {\bf 7}, Springer-Verlag, Berlin-Heidelberg-New York, 1977. 

\bibitem{FP17} S. Friedl and M. Powell, A calculation of Blanchfield pairings of 3-manifolds, {\sl Mosc. Math. J. \bf 17} (2017), 59 -- 77. 

\bibitem{Go33} L.~Goeritz, Knoten und quadratische Formen, {\sl Math. Z. {\bf 36}(1)} (1933), 647--654.

\bibitem{GL78} C.~McA. Gordon and R.~A. Litherland, On the signature of a link, {\sl Invent. Math. {\bf192}(3)} (1978), 53--69.



\bibitem{Ky54} R.~H. Kyle, Branched covering spaces and the quadratic forms of links, {\sl Ann. of Math. (2) \bf 59} (1954), 539--548.

\bibitem{LS77} P.~C. Lyndon and P.~E. Schupp, Combinatorial group theory, Springer Verlag, 
Berlin-Heidelberg-New York, 1977.

\bibitem{Me84} W. Menasco, Closed incompressible surfaces in knot and link complements, {\sl Topology \bf 23} (1984), 37--44.

\bibitem{Mu65} K. Murasugi, On the Minkowski unit of slice links, {\sl Trans. Amer. Math. Soc. \bf 114} (1965), 377--383.




\bibitem{Se36} H. Seifert, Die Verschlingungsinvarianten der zyklischen Knoten\"uberlagerungen, {\sl Abh. Math. Sem. Univ. Hamburg \bf 11} (1936), 84--101. 

\bibitem{SW18} D.~S. Silver and S.~G. Williams, Knot invariants from Laplace matrices, preprint, 2018.

\bibitem{Tr85} L. Traldi, On the Goeritz matrix of a link, {\sl Math. Z. {\bf188}(2)}, (1985), 203--213.






\end{thebibliography}
\end{document}